\newtheorem{theorem}{Theorem}[section]
\newtheorem{lemma}[theorem]{Lemma}
\newtheorem{question}[theorem]{Question}
\newtheorem{corollary}[theorem]{Corollary}
\newtheorem{example}[theorem]{Example}
\DeclareMathOperator{\conv}{conv}
\numberwithin{equation}{section}
\begin{document}
\title{Equivariant absolute extensor property on hyperspaces of convex sets}
\author{Natalia Jonard-P\'erez}


\address{Departamento de  Matem\'aticas, Universidad de Murcia, 30100 Espinardo, Murcia, Espa\~na.}

\email{(N.\,Jonard-P\'erez) natalia.jonard@um.es, nat@ciencias.unam.mx}


\thanks{{\it 2010 Mathematics Subject Classification}. 54C55,  54B20, 57N20, 57S10,  52A20, 	52A07.}

\keywords{Equivariant absolute extensor, hyperspace of convex sets, convex set, group action}

\thanks{The author was supported by CONACYT (Mexico) grant 204028}

\begin{abstract} 
Let $G$ be a compact group acting on a Banach space $L$  by means of affine transformations.  The action of $G$ on $L$ induces a natural continuous action
on $cc(L)$, the hyperspace of all compact convex subsets of $L$ endowed with the Hausdorff metric topology. The main result of this paper states  that the  $G$-space $cc(L)$ is a $G$-$\rm{AE}$.  Under some extra assumptions, this result can be extended to  $CB(L)$, the hyperspace of all closed and bounded convex subsets of $L$.
\end{abstract}

\maketitle\markboth{ N. JONARD-P\'EREZ}{EQUIVARIANT ABSOLUT EXTENSOR PROPERTY ON HYPERSPACES}

\section{Introduction}

For every Banach space $(L, \|\cdot\|)$ and every subset $M$ of $L$, let us denote by $CB(M)$ the hyperspace of all closed and bounded convex subsets of $M$ endowed with the Hausdorff metric 
$$d_H(A,B)=\inf\{\varepsilon>0\mid A\subset N(B,\varepsilon),~B\subset N(A,\varepsilon)\}$$
where $d$ is the metric induced by the norm and $N(A,\varepsilon)=\{x\in L\mid d(x,A)< \varepsilon\}$. By  $cc(M)$ we denote the subspace of $CB(M)$ consisting of all compact convex sets of $M$. 

The absolute extensor property on hyperspaces of convex sets has been long investigated. 
For a Banach space $L$, it is well known that hyperspaces $cc(L)$ and $CB(L)$ are absolute extensors, while the hyperspace $\conv(L)$ of all closed convex subsets of $L$ (equipped with the Hausdorff metric topology) is an absolute neighborhood extensor (see, \cite{Sakai 2006} and \cite{Sakai 2008}).  

Parallel to the classic theory of absolute extensors,  the notion of an equivariant absolute extensor  ($G$-$\rm{AE}$) and equivariant absolute neighborhood extensor ($G$-$\rm{ANE}$)  has been widely studied  and nowadays there are some very interesting results that generalize the classical Dugundji's Extension Theorem in the equivariant case (see Theorems~\ref{Equivariant Dugundji Theorem} and ~\ref{Equivariant Dugundji Theorem Lie} cf. \cite{A80} and \cite{AntonyanDugundjistheorem}). 
 
Concerning the equivariant absolute extension property on $G$-hyperspaces of compact sets,  in \cite[Proposition~3.1]{Antonyan 2003} S. Antonyan proved that the hyperspace of all compact subsets of a metrizable $G$-space $X$  is a $G$-$\rm{ANE}$ ($G$-$\rm{AE}$) provided that $G$ is a compact group and $X$ is  locally continuum-connected (resp., connected and locally continuum-connected). Also,   
in \cite[Corollary 4.6]{Antonyan 2005}, it was proved that $cc(M)$ is a $G$-$\rm{ANE}$  ($G$-$\rm{AE}$) if $G$ is a Lie group acting linearly on a normed space $L$ and $M\subset L$ is an invariant convex (complete) subset.  

Orbit spaces of hyperspaces of convex sets have been studied in the past because of their relation to such classical objects such as the Hilbert cube and the Banach-Mazur compacta $BM(n)$, $n\ge 2$ (see e.g. \cite{an:00}, \cite{Antonyan 2003}, \cite{AntJon} and \cite{Ant Jon Jua}). In the proof of these results, an important step has been to establish whether or not a certain  hyperspace of convex compact sets of a Banach space is an equivariant absolute extensor. 
 
Motivated by these results, we investigate the equivariant extensor property in hyperspaces of compact and convex subsets of a Banach space $L$ where a compact group acts by means of linear isometries. We also investigate the possibility of extending this result to  $CB(L)$. However, in this case the induced action of $G$ on $CB(L)$ is not always continuous (see Example~\ref{e:accion discontinua}), although in certain cases, for example if the topology on $G$ is the one induced by the norm operator, the induced action on $CB(L)$ is continuous and $CB(L)$ is a $G$-$\rm{AE}$ (see Theorem~\ref{c:main} and Corollary \ref{c:main 2}). Finally, in Theorem~\ref{t:accion afin}, we prove that $cc(L)$ and some invariant subspaces of it are $G$-$\rm{AE}$ if the group $G$ is acting on $L$ by means of affine transformations.

The author wishes to express her gratitude to Sergey Antonyan for suggesting the problem studied in this article.

\section{Preliminaries}

We refer the reader to the monographs \cite{Bredon} and \cite{Palais} for the basic notions of the theory of $G$-spaces. However, we'll recall here some special definitions and results that will be used throughout the paper.

All maps between topological spaces are assumed to be continuous. A map $f:X\to Y$ between $G$-spaces is called $G$-\textit{equivariant} (or simply \textit{equivariant}) if $f(gx)=gf(x)$ for every $x\in X$ and $g\in G$. If $G$ acts trivially on $Y$ (i.e., $gy=y$ for every $g\in G$ and $y\in Y$), an equivariant map $f:X\to Y$ is simply called \textit{invariant}. 

Let $(X,d)$ be a metric $G$-space. If $d(gx,gy)=d(x,y)$ for every $x,y\in X$ and $g\in G$, then we say that $d$ is a $G$-\textit{invariant} metric. That is, every $g\in G$ is actually an isometry of $X$ with respect to the metric $d$. We also say that $G$ acts \textit{isometrically} on $X$.

A point $x_0$ in a $G$-space $X$ is called a $G$-\textit{fixed point} if $gx_0=x_0$ for every $g\in G$. We say that $A\subset X$ is  \textit{$G$-invariant}  (or simply \textit{invariant}) if $ga\in A$ for every $a\in A$ and $g\in G$.

Let $G$ be a topological group and $X$ a (real) linear space.
 We call $X$ a \textit{linear $G$-space} if there is a \textit{linear action} of $G$ on $X$, i.e., if 
\begin{equation*}\label{linact}
g(\alpha x+\beta y)=\alpha(gx)+\beta(gy)
\end{equation*}
for every $g\in G$, $\alpha,\beta\in\mathbb{R}$ and $x,y\in X$. If, in addition, $X$ is a Banach space and the  norm is $G$-invariant,
we will say that $X$ is a \textit{Banach $G$-space}. 
On the other hand, we say that an action of $G$ on $X$ is \textit{affine} if $g(tx+(1-t)y)=tgx+(1-t)gy$ for every $x,y\in X$ and $t\in [0,1]$. Obviously, every linear action is also an affine action.

If $L$ is a Banach space and $G$ is a compact group, we denote by $C(G,L)$ the  space of all continuous maps from $G$ into $L$ equipped with the compact open topology which, due to the compactness of $G$, can be generated  by the norm
\begin{equation}\label{f:norma en C(G,L)}
\|f\|=\sup_{x\in G}\{\|f(x)\|\},\quad f\in C(G, L)
\end{equation}

We  consider the action of $G$ on $(C(G,L), \|\cdot\|)$ defined by the rule
\begin{equation}\label{f:accion en C(G,L)}
gf(x)=f(xg)\quad x,g\in G, ~~f\in C(G,L).
\end{equation}
This action is continuous, linear, and the norm $\|\cdot\|$ becomes a $G$-invariant norm (see \cite{A80} cf. \cite{Smirnov}). Thus $(C(G,L), \|\cdot\|)$ equipped with the action~(\ref{f:accion en C(G,L)}) is a Banach $G$-space.

For a given topological group $G$, a metrizable $G$-space $X$ is called a $G$-\textit{equivariant absolute neighborhood extensor}
 (denoted by $X\in G$-$\mathrm{ANE}$) if for any  metrizable $G$-space $Z$ and any equivariant map $f:A\to X$ from an invariant closed subset $A\subset Z$ into $X$, there exists an  invariant neighborhood $U$ of $A$ in $Z$ and an equivariant map $F:U\to X$ such that $F|_A=f$.  If we can always take $U=Z$, then we say that  $X$ is a $G$-\textit{equivariant absolute extensor} (denoted by $X\in G$-$\mathrm{AE}$).
 
The following two theorems will be the key in the proof of our main result.

\begin{theorem}[{\rm \cite[Theorem~2]{A80}}]\label{Equivariant Dugundji Theorem}
Let $G$ be a compact group acting linearly on a locally convex metric linear space $X$ and $K$ an invariant complete convex subset of $X$. Then $K$ is a $G$-{\rm AE}. 
\end{theorem}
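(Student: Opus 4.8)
The plan is to combine the classical (non-equivariant) Dugundji Extension Theorem with an averaging argument over the compact group $G$ by means of the Haar integral. So let $Z$ be a metrizable $G$-space, $A\subset Z$ an invariant closed subset, and $f\colon A\to K$ an equivariant map. Since $K$ is a convex subset of a locally convex metric linear space, the classical Dugundji theorem guarantees that $K$ is an (ordinary) absolute extensor, so there is a continuous — but in general \emph{not} equivariant — extension $\tilde F\colon Z\to K$ with $\tilde F|_A=f$.

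The next step is to symmetrize $\tilde F$. Let $dg$ denote the normalized Haar measure on $G$ and define, for each $z\in Z$,
\[
F(z)=\int_G g^{-1}\,\tilde F(gz)\,dg,
\]
the integral being understood as the barycenter of the push-forward of $dg$ under the continuous map $g\mapsto g^{-1}\tilde F(gz)$. The point of this definition is that it repairs equivariance: after the substitution $g\mapsto gh$, using the bi-invariance of the Haar measure together with the linearity (hence commutation with integration) of each $h\in G$, one obtains $F(hz)=hF(z)$. It also preserves the extension: for $a\in A$ we have $\tilde F(ga)=f(ga)=g\,f(a)$ because $A$ is invariant and $f$ is equivariant, so the integrand collapses to the constant $f(a)$ and $F(a)=f(a)$.

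The heart of the matter is to make the displayed integral rigorous and to verify that it takes values in $K$. For fixed $z$ the map $\varphi_z\colon g\mapsto g^{-1}\tilde F(gz)$ is continuous on the compact group $G$, and since $K$ is invariant every value $\varphi_z(g)$ lies in $K$; hence $\varphi_z(G)$ is a compact subset of $K$. In a locally convex space the closed convex hull of a compact set, formed inside the complete convex set $K$, is again compact, since it is closed in the complete space $K$ and totally bounded by Mazur's theorem; and the barycenter of a Borel probability measure supported on a compact convex set exists and belongs to that set. This is exactly where the two standing hypotheses are used: convexity and completeness of $K$ guarantee both the existence of the average and its membership in $K$. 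Equivalently, one may define $F(z)$ as the limit of finite convex combinations $\sum_i \mu(G_i)\,\varphi_z(g_i)$ over finer and finer partitions of $G$; each such combination lies in $K$ by convexity, and the net is Cauchy, hence convergent, by completeness.

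It remains to check the continuity of $F$. I would argue that the assignment $z\mapsto \varphi_z$ is continuous from $Z$ into $C(G,X)$ and that the barycenter operation is continuous; since the action is jointly continuous and $\tilde F$ is continuous, a uniform-continuity estimate on the compact group $G$ then yields continuity of $F$ on $Z$. Thus $F\colon Z\to K$ is a continuous equivariant extension of $f$, proving that $K$ is a $G$-$\mathrm{AE}$. I expect the genuinely delicate step to be the verification that the barycenter is well defined and $K$-valued even though the ambient space $X$ need not be complete; everything else — equivariance and the extension identity on $A$ — is a formal consequence of the invariance of the Haar measure and of $K$.
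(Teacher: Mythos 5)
This theorem is not proved in the paper at all: it is quoted verbatim from \cite[Theorem~2]{A80} and used as a black box, so there is no in-paper proof to compare against. That said, your argument is the standard (and, as far as I can tell, correct) proof of the compact-group equivariant Dugundji theorem: extend non-equivariantly by the classical Dugundji theorem and then symmetrize by averaging $g^{-1}\tilde F(gz)$ against the normalized Haar measure, using bi-invariance of the measure and linearity of the action for equivariance, and collapsing the integrand to a constant on $A$ for the extension property. You have also correctly isolated the only genuinely delicate point, namely that the barycenter must be formed inside $K$ rather than in the (possibly incomplete) ambient space $X$: since $K$ is complete it is closed in $X$, the convex hull of the compact set $\varphi_z(G)\subset K$ is totally bounded by Mazur's theorem, and its closure is therefore a compact convex subset of $K$ carrying the barycenter. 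Together with the routine uniform-continuity argument showing $z\mapsto\varphi_z$ is continuous into $C(G,X)$ and the seminorm estimate $p\bigl(\int_G\varphi\,dg\bigr)\le\sup_{g\in G}p(\varphi(g))$ for the continuity of $F$, this gives a complete proof.
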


\begin{theorem}[\cite{AntonyanDugundjistheorem}]\label{Equivariant Dugundji Theorem Lie} Let $G$ be a compact Lie group and  $X$ be a locally convex linear $G$-space. Then every convex invariant subset $K\subset X$ is a $G$-{\rm ANE}. Furthermore, if $K$ has a $G$-fixed point, then $K$ is a $G$-$\rm{AE}$.
\end{theorem}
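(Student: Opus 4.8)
The plan is to imitate the classical Dugundji construction, but to carry it out so that the extension is built as a \emph{locally finite convex combination of equivariant local maps with invariant coefficients}. The decisive algebraic observation is the following: if $\{v_j:U_j\to K\}$ is a family of equivariant maps defined on invariant open sets $U_j$, and $\{\lambda_j\}$ is a locally finite family of invariant functions with $\sum_j\lambda_j\equiv1$ and $\operatorname{supp}\lambda_j\subset U_j$, then $F:=\sum_j\lambda_j v_j$ is again equivariant, because $F(gz)=\sum_j\lambda_j(gz)v_j(gz)=\sum_j\lambda_j(z)\,g\,v_j(z)=gF(z)$; moreover, since the sum is locally finite, near each point $F$ is a \emph{finite} convex combination of points of $K$, so $F(z)\in K$ by convexity. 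This is exactly where the argument must depart from Theorem~\ref{Equivariant Dugundji Theorem}: the absence of infinite convex combinations (integrals over $G$) is what allows one to dispense with completeness of $K$.

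Given this, the problem reduces to two ingredients: an invariant partition of unity, and equivariant \emph{local} extensions. For the first I would take an ordinary locally finite partition of unity on the metrizable space $Z$ and average its members over $G$ with respect to normalized Haar measure; since these are real-valued functions and $\mathbb{R}$ is complete, the averaging causes no difficulty and yields an invariant, locally finite partition of unity subordinate to any prescribed invariant open cover. For the local extensions I would invoke the Palais slice theorem, available precisely because $G$ is a compact Lie group: a neighborhood of each orbit in $Z$ is $G$-homeomorphic to a tube $G\times_H S$, where $H$ is an isotropy group and $S$ a slice. Under this identification a $G$-equivariant map $G\times_H S\to K$ corresponds bijectively to an $H$-equivariant map $\varphi:S\to K$ via $[g,s]\mapsto g\varphi(s)$, so extending $f$ equivariantly over the tube amounts to extending it $H$-equivariantly over the slice.

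The technical heart is therefore the construction of these $H$-equivariant slice extensions, and this is where I expect the main obstacle. The continuous directions of the group (the $G/H$ factor in the tube) are handled \emph{for free} by the explicit action in the formula $[g,s]\mapsto g\varphi(s)$, so no integration is needed for them; the difficulty is concentrated in the slice, where $H$ still acts. I would attack this by induction on orbit type, peeling off further tubes inside $S$ for the $H$-action and passing to smaller isotropy groups. The crucial base case is that of \emph{finite} isotropy: when the acting group is finite, the equivariant projection $\varphi(s)=\tfrac{1}{|H|}\sum_{h\in H}h^{-1}\psi(hs)$ of a raw (non-equivariant) Dugundji extension $\psi:S\to K$ is a \emph{finite} convex combination of points of $K$, hence lands in $K$ with no completeness hypothesis. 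Controlling this induction — ensuring local finiteness of the resulting system of orbit types and the compatibility of the slice extensions across overlapping tubes — is the part that requires genuine care, and is the reason the Lie hypothesis (hence the slice theorem) cannot be dropped.

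Assembling the local equivariant extensions with the invariant partition of unity then produces, as in the classical argument, an equivariant extension $F$ over an invariant neighborhood $U$ of $A$, proving $K\in G$-$\mathrm{ANE}$. For the final assertion, suppose $K$ has a $G$-fixed point $x_0$. Choosing an invariant Urysohn function $\lambda:Z\to[0,1]$ equal to $1$ on a neighborhood of $A$ and supported in $U$ (again obtained by averaging), I would set $\widehat F(z)=\lambda(z)F(z)+(1-\lambda(z))x_0$ on $U$ and $\widehat F(z)=x_0$ elsewhere. Since $x_0$ is $G$-fixed, $\lambda$ is invariant and $F$ is equivariant, $\widehat F$ is equivariant; being a convex combination of points of $K$, it takes values in $K$. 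Thus $\widehat F$ is a global equivariant extension and $K\in G$-$\mathrm{AE}$.
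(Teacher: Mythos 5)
The paper does not actually prove this statement: it is quoted from Antonyan's \emph{Equivariant generalization of Dugundji's theorem} \cite{AntonyanDugundjistheorem}, so there is no in-paper proof to compare against; I am judging your sketch against the argument in that reference. Your overall architecture is the right one: assemble the extension as a locally finite sum $\sum_j\lambda_j v_j$ with invariant coefficients $\lambda_j$ (Haar-averaging an ordinary partition of unity does work, and local finiteness survives for compact $G$ with an invariant metric) and equivariant $v_j$, and you correctly isolate the decisive point that \emph{finite} convex combinations stay in $K$, which is exactly why completeness can be dropped here but not in Theorem~\ref{Equivariant Dugundji Theorem}.

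The gap is in the construction of the local maps $v_j$. Your plan --- realize each $v_j$ as a genuine local \emph{extension} of $f$ over a tube $G\times_H S$, reduce to an $H$-equivariant extension over the slice, and induct on orbit type down to finite isotropy --- does not close. First, the induction has no valid termination at points with infinite isotropy: at a $G$-fixed point $a\in A$ (say $G=S^1$) the slice is a full neighborhood carrying the whole $G$-action, so the ``reduction'' returns the original problem and the finite-isotropy base case is never reached. Second, for a tube lying in $Z\setminus A$ there is nothing to extend, and choosing any equivariant map of that tube into $K$ forces the existence of a point of $K^H$, which is precisely what you cannot produce by averaging without completeness. The proof in \cite{AntonyanDugundjistheorem} avoids local extensions altogether: a neighborhood of $A$ minus $A$ is covered by small tubes $G\times_{H_j}S_j$ chosen, via the slice theorem applied at points of $A$ (which makes isotropy groups in the tube subconjugate to those of nearby points of $A$), together with anchor points $a_j\in A$ with $H_j\subset G_{a_j}$, and one sets $v_j([g,s])=gf(a_j)$ --- a ``twisted constant'' map whose values already lie in $f(A)\subset K$. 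The Lie hypothesis enters only through the slice theorem and this lower semicontinuity of orbit type, not through any induction. Your final step (interpolating with the $G$-fixed point by an invariant Urysohn function to upgrade $G$-ANE to $G$-AE) is correct and standard.
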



For any subsets $A$ and $B$ of a linear space $L$  and $t\in\mathbb{R}$, the sets $$A+B=\{a+b\mid a\in A,~ b\in B\}\quad\quad\textnormal{and}\quad\quad tA=\{ta\mid a\in A\}$$ are called the \textit{Minkowski sum} of $A$ and $B$ and the \textit{product} of $A$ by $t$, respectively. It is well known that these operations preserve compactness and convexity.
However, if $A$ and $B$ are closed subsets, it is not always true that $A+B$ is closed.

The \textit{Hausdorff distance} between two arbitrary  subsets $A$ and $B$ of a metric space $(X,d)$ is defined by the rule:
\begin{equation}\label{Hausdorff}
d_H(A,B)=\inf\{\varepsilon >0\mid A\subset N(B, \varepsilon) \text{ and }B\subset N(A,\varepsilon)\},\quad A,B \subset X.
\end{equation}
where $N(A,\varepsilon)=\{x\in L\mid d(x,A)< \varepsilon\}$.
It is well-known that the Hausdorff distance satisfies 
\begin{equation}\label{e:distancia entre cerraduras}
d_H(A,B)=d_H\big(\overline{A},\overline{B}\big)
\end{equation} 
for every pair $A$ and $B$ of bounded subsets of $X$.
Additionally, if $X$ is a linear normed space and $A$ and $B$ are convex subsets, then
\begin{equation}\label{e:d hausdorff invariante bajo traslaciones}
d_H(A+C,B+C)=d_H(A,B)
\end{equation}
for every bounded set $C\subset X$. For these and other properties consult \cite{Schmidt} (c.f. \cite{Radstrom}).

Finally, we recall the following well known result which will be used in the last part of this work.

\begin{lemma}\label{l:closed hyperspace}
Let $M$ be a closed subset of a metric space $X$. Then $2^{M}$ is closed in $2^{X}$, where $2^{X}$ denotes the hyperspace of all nonempty compact subsets of $X$ equipped with the Hausdorff metric. In particular, if $X$ is a linear space, $cc(M)$ is closed in $cc(L)$.
\end{lemma}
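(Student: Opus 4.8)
The plan is to show that $2^{M}$ is \emph{sequentially} closed in $2^{X}$, which suffices since the Hausdorff metric makes $2^X$ a metric space. So I would take a sequence $(A_n)_{n\in\mathbb N}$ with $A_n\in 2^{M}$ for all $n$ and $A_n\to A$ in the Hausdorff metric for some $A\in 2^{X}$, and the goal becomes to prove that $A\in 2^{M}$, i.e. that $A\subset M$ (the set $A$ is already nonempty and compact by hypothesis). The whole argument then reduces to a pointwise approximation combined with the closedness of $M$.

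To that end, fix an arbitrary point $x\in A$. Writing $\varepsilon_n=d_H(A_n,A)$, the convergence $A_n\to A$ gives $\varepsilon_n\to 0$, and the definition~(\ref{Hausdorff}) of the Hausdorff distance yields $A\subset N(A_n,\varepsilon_n')$ for any $\varepsilon_n'>\varepsilon_n$; consequently $d(x,A_n)\le \varepsilon_n$. Here the compactness of each $A_n$ is used in an essential way: it guarantees that the distance $d(x,A_n)$ is attained at some point $a_n\in A_n$, so that $d(x,a_n)=d(x,A_n)\le\varepsilon_n\to 0$. Since $a_n\in A_n\subset M$ and $a_n\to x$, the closedness of $M$ forces $x\in\overline{M}=M$. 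As $x\in A$ was arbitrary, we conclude $A\subset M$, hence $A\in 2^{M}$, and $2^{M}$ is closed in $2^{X}$.

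For the final assertion, suppose $X$ is a linear (normed) space and write $L=X$. I would simply observe that a compact convex subset of $L$ that happens to lie inside $M$ is exactly a compact convex subset of $M$, since convexity of such a set does not depend on whether it is regarded inside $M$ or inside $L$. This gives the identity $cc(M)=cc(L)\cap 2^{M}$. Because $cc(L)$ carries the subspace topology inherited from $2^{L}$ under the Hausdorff metric and $2^{M}$ is closed in $2^{L}$ by the first part, the intersection $cc(L)\cap 2^{M}=cc(M)$ is closed in $cc(L)$, as claimed.

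I do not expect a genuine obstacle here; the only point requiring care is the extraction step, where compactness of the approximating sets $A_n$ is what allows the control $d(x,a_n)\le d_H(A_n,A)$ with $a_n\in M$. Everything else is a direct application of the definition of the Hausdorff metric and of the closedness of $M$.
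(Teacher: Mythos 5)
Your proof is correct, but it runs in the opposite direction from the paper's. The paper shows directly that the complement $2^{X}\setminus 2^{M}$ is open: given a compact $A\not\subset M$, it picks $a\in A\setminus M$, uses closedness of $M$ to get $\varepsilon>0$ with $d(a,M)>\varepsilon$, and observes that no $C\subset M$ can then satisfy $d_H(C,A)<\varepsilon$ --- a two-line argument that never mentions sequences and does not need compactness of the sets in $2^{M}$ at all. You instead prove sequential closedness, extracting points $a_n\in A_n$ with $d(x,a_n)\le d_H(A_n,A)$ and passing to the limit. Both arguments are valid and both hinge on the same geometric fact (a point outside the closed set $M$ is at positive distance from $M$); since $2^{X}$ is metric, sequential closedness does suffice. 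Your invocation of compactness of $A_n$ to attain the distance is harmless but inessential --- choosing $a_n\in A_n$ with $d(x,a_n)\le d_H(A_n,A)+1/n$ works just as well, so the lemma really only uses that the members of $2^{M}$ are subsets of $M$. Your treatment of the second assertion via $cc(M)=cc(L)\cap 2^{M}$ is exactly what the paper leaves implicit, and is fine.
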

\begin{proof} It is enough to prove that $2^{X}\setminus 2^{M}$ is open.
For any $A\in 2^{X}\setminus 2^{M}$, there exist $a\in A\setminus M$ and $\varepsilon >0$, such that $d(a, M)>\varepsilon$. From this inequality, we get that  no $C\subset M$ satisfies $d_H(C, A)<\varepsilon$ and therefore $2^{X}\setminus 2^{M}$ is open, as required.
\end{proof}

\section{Group actions on hyperspaces of convex sets}

Let $L$ be a Banach space and $G$ a compact group acting continuously on $L$ by means of affine homeomorphisms.
 The action of $G$ induces a natural action on $CB(L)$ by the rule
\begin{equation}\label{f:accion}
(g, A)\longrightarrow gA:=\{ga\mid a\in A\}.
\end{equation}

It is easy to verify that the restriction of this action to $cc(L)$ is always continuous. However, as we will show in the following example, the action (\ref{f:accion}) is not always  continuous on $CB(L)$ even if $L$ is a Banach $G$-space.

\begin{example}\label{e:accion discontinua}
Let $G=\mathbb Z_{2}^{\infty}$ denote the Cantor group. Every element $x\in G$ can be represented as a sequence $x=(x_i)_{i\in\mathbb N}$ where $x_i\in\mathbb Z_2=\{1,-1\}$. In this case the product topology on $G$ is a metrizable group topology if the operation is defined  by the rule
$$xy=(x_iy_i)_{i\in\mathbb N}\quad\text{ for each }~x=(x_i)_{i\in\mathbb N}\in G,~~y=(y_i)_{i\in\mathbb N}\in G.$$
Now, consider the space $C(G,\mathbb R)$ of all continuous real-valued maps defined on $G$. The space $L=C(G, \mathbb R)$ equipped with the  norm {\rm(\ref{f:norma en C(G,L)})} and  the action {\rm(\ref{f:accion en C(G,L)})}  becomes a Banach $G$-space with the property that the action defined in {\rm(\ref{f:accion})} is not continuous on $CB(L)$.
\end{example}

\begin{proof}
Consider the set $A\subset L$ consisting of all continuous maps $f:G\to [0,1]$ such that $f(e)=0$ where $e\in G$ is the identity element $e=(1,1,\dots).$
Obviously $A$ is a closed and  bounded convex subset of $L$.
We will show that the action of $G$ on $CB(L)$ is not continuous in the pair $(e, A)$. Consider any  neighbourhood $Q$ of $e$ and pick an arbitrary point $y\in Q\setminus\{e\}$.  
By Urysohn's Lemma there exists a continuous map $f:G\to [0,1]$ such that $f(e)=0$ and $f(y)=1$. Evidently $f\in A$. Furthermore, for every $\varphi\in A$, 
$$\|\varphi-yf\|=\sup_{x\in G} |\varphi(x)-yf(x)|\geq |\varphi(e)-yf(e)|=|\varphi(e)-f(y)|=|0-1|=1.$$ 

This directly implies that $d_H(A, yA)\geq 1$ and therefore the action defined in (\ref{f:accion}) cannot be continuous on $CB(L)$.

\end{proof}

Despite the previous example,  certain cases exist where the action (\ref{f:accion}) is continuous on $CB(L)$. The most simple example is the case when $G$ is finite. A more interesting example is explained below.

First we'll state the following lemma, which will be used several times in this paper. The proof will be omitted, as it is a direct consequence of the Hausdorff distance definition. 

\begin{lemma}\label{l:dhinvariante}
Suppose that $G$ acts linearly and isometrically on a normed linear space $L$. Then, the Hausdorff metric on $CB(L)$ induced by the $G$-invariant norm is $G$-invariant if we equip $CB(L)$ with the (non-necessarily continuous) action defined in \rm{(\ref{f:accion})}.
\end{lemma}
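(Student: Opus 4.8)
The plan is to show that the Hausdorff metric $d_H$ on $CB(L)$ is preserved by each group element $g\in G$, i.e.\ that $d_H(gA,gB)=d_H(A,B)$ for all $A,B\in CB(L)$ and all $g\in G$. The key observation is that, because $G$ acts linearly and isometrically, each fixed $g$ is in particular a surjective isometry of the metric space $(L,d)$, where $d$ is the metric induced by the $G$-invariant norm. Any surjective isometry of an ambient metric space transports the Hausdorff metric on subsets isometrically, and this is the content we wish to extract.

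First I would unwind the definition~(\ref{Hausdorff}). Fix $g\in G$ and sets $A,B\subset L$. I would verify that $g$ commutes with the metric-ball-neighbourhood operation, namely $g\,N(B,\varepsilon)=N(gB,\varepsilon)$; this follows at once from $d(ga,gb)=d(a,b)$ together with the surjectivity of $g$ (since $g$ is a linear bijective isometry of a Banach space, its inverse $g^{-1}$ is again such an isometry, so $g$ maps an open $\varepsilon$-neighbourhood onto an open $\varepsilon$-neighbourhood). Consequently, for any $\varepsilon>0$, the containment $A\subset N(B,\varepsilon)$ holds if and only if $gA\subset gN(B,\varepsilon)=N(gB,\varepsilon)$, and symmetrically $B\subset N(A,\varepsilon)$ if and only if $gB\subset N(gA,\varepsilon)$. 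Therefore the set of admissible $\varepsilon$ in the infimum defining $d_H(A,B)$ coincides exactly with the set of admissible $\varepsilon$ defining $d_H(gA,gB)$, whence the two infima agree and $d_H(gA,gB)=d_H(A,B)$.

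Having this equality for every $g\in G$ is precisely the statement that $d_H$ is a $G$-invariant metric on $CB(L)$ equipped with the action~(\ref{f:accion}), regardless of whether that action is continuous. I should remark explicitly that continuity of the $G$-action on $CB(L)$ plays no role here: $G$-invariance of a metric is a pointwise condition checked one group element at a time, so it is insensitive to the joint continuity of $G\times CB(L)\to CB(L)$. This is why the lemma can be stated with the parenthetical "non-necessarily continuous" caveat.

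There is essentially no genuine obstacle in this argument; the only point requiring a modicum of care is the bijectivity of each $g$ on $L$, which is needed so that $gN(B,\varepsilon)$ is exactly $N(gB,\varepsilon)$ rather than merely contained in it. Since the linear action of the group is by isometries and each group element has an inverse in $G$ acting as its inverse map, each $g:L\to L$ is a homeomorphism, and this subtlety dissolves. For this reason the author's decision to omit the proof as "a direct consequence of the Hausdorff distance definition" is entirely warranted.
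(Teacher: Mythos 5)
Your argument is correct and is precisely the ``direct consequence of the Hausdorff distance definition'' that the paper invokes when omitting the proof: each $g$ is a bijective isometry of $L$ (by linearity, $G$-invariance of the norm, and the existence of $g^{-1}\in G$), so $gN(B,\varepsilon)=N(gB,\varepsilon)$ and the sets of admissible $\varepsilon$ in the two infima coincide. Nothing is missing, and your remark that invariance is checked one group element at a time, independently of joint continuity, correctly explains the parenthetical caveat in the statement.
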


For every  Banach $G$-space $L$, the group $G$ is in fact a subgroup of $U(L)=\{T:L\to L\mid T \text{ is a linear operator with } \|T\|_*=1\}$, where $\|\cdot\|_{*}$ denotes the norm operator:
$$\|T\|_{*}=\sup_{x\in L\setminus\{0\}}\frac{\|T(x)\|}{\|x\|}.$$
  The topology on $G$ induced by the norm operator is the \textit{topology of the uniform convergence on bounded sets}.
  
  \begin{example}
  Let $L$ be a Banach $G$-space where $G$ is a compact group. Suppose that the topology on $G$ contains the topology of the uniform convergence on bounded sets. Then the induced action on $CB(L)$ is continuous. 
  \end{example} 

\begin{proof}
Pick a pair $(g,A)\in G\times CB(L)$ and let $\varepsilon>0$.
Define $M=\max_{a\in A} \|a\|$ and consider $\delta<\varepsilon/2M$.
Denote by $1_L$ the identity map of $L$ and let $O=\{h\in G\mid \|1_L-h\|_*<\delta\}$.
Thus, for any $a\in A$ and $h\in O$ we have:
$$\|a-ha\|\leq \|1_L-h\|_*M<M\delta$$
and hence 
\begin{equation}\label{d:1}
d_H(A,hA)\leq M\delta<\varepsilon/2,\quad \text{for every }h\in O.
\end{equation}
Since the topology of $G$ contains the topology of the uniform convergence, the set $U:=gO=\{gt\mid \|1_L-t\|_*<\delta\}$ is open in $G$. Observe that every $h\in U$  satisfies 
$g^{-1}h\in O$. Let $Q$ be the $\varepsilon/2$-neighborhood around $A$ in $CB(L)$.
Then, if $(h,B)\in U\times Q$, we can use inequality (\ref{d:1}) and Lemma~\ref{l:dhinvariante} below to conclude that
\begin{align*}
d_H(gA, hB)&=d_H(A, g^{-1}hB)\leq d_H(A,g^{-1}hA)+d_H(g^{-1}hA,g^{-1}hB)\\
&=d_H(A,g^{-1}hA)+d_H(A,B)<\varepsilon/2+\varepsilon/2=\varepsilon.  
\end{align*}
 Now the proof is complete.
\end{proof}

%
%

\section{Equivariant embeddings of hyperspaces}

The main purpose of this section is to reconstruct the R\aa dstr\"om-Schmidt Embedding Theorem (\cite{Radstrom} and \cite{Schmidt}) in order to prove that the hyperspaces $cc(L)$ (and, in some cases, $CB(L)$) can be  embedded as an invariant closed convex subset of a Banach $G$-space.

In what follows,  $L$ will always denote a   Banach $G$-space.
Also, we will use the symbol $\mathcal K$ to denote simultaneously the 
hyperspace $CB(L)$ or $cc(L)$.

Let us denote by $H(\mathcal K)$  the quotient space of $\mathcal K\times \mathcal K/\sim$ obtained by the following equivalence relationship:
$$(A,B)\sim (C,D)\Longleftrightarrow \overline{A+D}=\overline{B+C}$$

For every $(A,B)\in \mathcal K\times \mathcal K $ we denote by $\langle A,B\rangle$ its corresponding equivalence class in $\mathcal H(\mathcal K)$. 

The space $\mathcal H(\mathcal K)$ becomes a real linear space if we define the sum by:
$$\langle A,B\rangle+\langle C,D\rangle:=\langle \overline {A+C}, \overline{C+D}\rangle$$
and the scalar multiplication by:
\begin{equation*}
t\langle A,B\rangle:=\begin{cases}
\langle tA,tB\rangle& t\geq 0,\\
\langle-tB, -tA\rangle& t\leq 0.
\end{cases}
\end{equation*}

The class $\langle \{0\}, \{ 0\} \rangle$  corresponds to the origin of $\mathcal H(\mathcal K)$ and the inverse of the element $\langle A,B\rangle\in \mathcal H(\mathcal K)$ coincides with the class $\langle B,A \rangle$.
Additionally,  $\mathcal H(\mathcal K)$ becomes a normed space if we define the following norm:
$$\|\langle A, B\rangle\|:=d_H(A,B),$$
where $d_H$ is the Hausdorff metric on $\mathcal K$ induced by the norm  of $L$.

Now,  the map $j:\mathcal K\to \mathcal H(\mathcal K)$ defined by
$$j(A):=\langle A, \{0\}\rangle$$
is an isometric embedding satisfying the following conditions:
\begin{enumerate}[(a)]
\item $j(tA)=tj(A)$
\item $j(\overline{A+B})=j(A)+j(B)$
\end{enumerate}
for every $t\geq 0$ and $A, B\in\mathcal K$.
Details about this construction can be consulted in \cite{Radstrom} and \cite{Schmidt}.

Now, suppose that $L$ is a Banach $G$-space. In this case, we can define a natural action on $\mathcal H(\mathcal K)$ by the rule

\begin{equation}\label{f:accion en H(K)}
g\langle A,B\rangle=\langle gA, gB\rangle.
\end{equation}

\begin{theorem}\label{t:equivariant embedding} Let $L$ be a Banach $G$-space and suppose that the induced action on $\mathcal K$ is continuous (if $\mathcal K=cc(L)$, this is always true). Then $\mathcal H(\mathcal K)$ equipped with the action defined in {\rm(\ref{f:accion en H(K)})} is a Banach $G$-space, $j(\mathcal K)$ is closed in $\mathcal H(\mathcal K)$  and the embedding $j$ is $G$-equivariant and isometric. 
\end{theorem}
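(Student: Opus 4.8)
The plan is to regard the purely linear--topological content of the R\aa dstr\"om--Schmidt construction---that $j$ is an isometric embedding into the normed space $\mathcal{H}(\mathcal{K})$, that the latter is complete, and the identities (a), (b)---as already established in \cite{Radstrom} and \cite{Schmidt}, and to concentrate on the genuinely new ingredient: the compatibility of this construction with the $G$-action. Concretely, I would verify in order that (\ref{f:accion en H(K)}) defines a well-defined linear action of $G$ on $\mathcal{H}(\mathcal{K})$, that each $g$ acts as a linear isometry (so the norm is $G$-invariant), that the action is jointly continuous, that $j$ is equivariant, and finally that $j(\mathcal{K})$ is closed.

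All but the continuity are routine and rest on one observation: since $L$ is a Banach $G$-space, every $g\in G$ is a linear homeomorphism of $L$, hence commutes with the Minkowski sum, with multiplication by a scalar, and with the closure operator, so that $g(A+B)=gA+gB$, $g(tA)=t(gA)$ and $g(\overline S)=\overline{gS}$. The last identity shows at once that $\overline{A+D}=\overline{B+C}$ implies $\overline{gA+gD}=\overline{gB+gC}$, so that (\ref{f:accion en H(K)}) is independent of the chosen representative and maps $\mathcal{H}(\mathcal{K})$ into itself; the first two identities then show that each $g$ preserves the sum and the scalar multiplication of $\mathcal{H}(\mathcal{K})$, i.e. acts linearly, while the group--action axioms are inherited from those on $L$. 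Because the $G$-invariant norm makes $G$ act isometrically on $L$, Lemma~\ref{l:dhinvariante} yields
\begin{equation*}
\|g\langle A,B\rangle\|=d_H(gA,gB)=d_H(A,B)=\|\langle A,B\rangle\|,
\end{equation*}
so each $g$ is a linear isometry and the norm is $G$-invariant. Equivariance of $j$ follows from $g\{0\}=\{0\}$ (linearity forces $g0=0$), since $j(gA)=\langle gA,\{0\}\rangle=\langle gA,g\{0\}\rangle=g\langle A,\{0\}\rangle=gj(A)$. Finally, being isometric, $j$ carries the complete metric space $(\mathcal{K},d_H)$---complete because $L$ is Banach---onto a complete, hence closed, subset of $\mathcal{H}(\mathcal{K})$.

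The step I expect to be the real obstacle is the joint continuity of $G\times\mathcal{H}(\mathcal{K})\to\mathcal{H}(\mathcal{K})$, since $\mathcal{H}(\mathcal{K})$ is only accessible through the abstract quotient and its points are formal differences. Here the isometry property just proved is decisive, as it reduces joint continuity to separate continuity in the group variable: fixing $(g_0,\xi_0)$ and using that each $g$ is an isometry,
\begin{equation*}
\|g\xi-g_0\xi_0\|\le\|g\xi-g\xi_0\|+\|g\xi_0-g_0\xi_0\|=\|\xi-\xi_0\|+\|g\xi_0-g_0\xi_0\|,
\end{equation*}
so it is enough to prove that $g\mapsto g\xi_0$ is continuous for each fixed $\xi_0$. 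Writing $\xi_0=\langle A,B\rangle=j(A)-j(B)$ gives $g\xi_0=j(gA)-j(gB)$, and the orbit maps $g\mapsto gA$, $g\mapsto gB$ are continuous into $\mathcal{K}$ exactly by the hypothesis that the induced action on $\mathcal{K}$ is continuous (which holds automatically when $\mathcal{K}=cc(L)$); composing with the continuous map $j$ and subtracting yields the required separate continuity, and with it the joint continuity. This establishes that $\mathcal{H}(\mathcal{K})$ is a Banach $G$-space in which $j(\mathcal{K})$ sits as a closed invariant convex subset via the equivariant isometric embedding $j$.
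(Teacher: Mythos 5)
Your proposal is correct and follows the same overall blueprint as the paper's proof: well-definedness of the action via $g(\overline{S})=\overline{gS}$, linearity left as routine, isometry of each $g$ via Lemma~\ref{l:dhinvariante}, equivariance of $j$ from $g\{0\}=\{0\}$, and the linear--topological facts delegated to \cite{Radstrom} and \cite{Schmidt}. You diverge in two places, both to good effect. For continuity, the paper runs a single direct estimate, bounding $\|g\langle A,B\rangle-h\langle C,D\rangle\|$ by $d_H(h^{-1}gA,A)+\|\langle A,B\rangle-\langle C,D\rangle\|+d_H(B,h^{-1}gB)$ using the translation-invariance (\ref{e:d hausdorff invariante bajo traslaciones}); you instead isolate the standard principle that an isometric action with continuous orbit maps is jointly continuous, and then get continuity of $g\mapsto g\xi_0$ from $\xi_0=j(A)-j(B)$, continuity of the orbit maps in $\mathcal K$, and continuity of subtraction in a normed space. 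The two arguments use exactly the same ingredients, but yours is more modular and makes transparent where the hypothesis on $\mathcal K$ enters. For closedness of $j(\mathcal K)$ the paper simply cites \cite{Radstrom} and \cite{Schmidt}, whereas you derive it from completeness of $(\mathcal K,d_H)$ plus the fact that an isometric image of a complete space is closed; this is a clean self-contained route, though it silently invokes the (standard, but unproved here) completeness of $cc(L)$ and $CB(L)$ over a Banach $L$, so you should either cite that fact or note it explicitly.
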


\begin{proof}
To see that $\mathcal H(\mathcal K)$ is a Banach space, $j$ is an isometry and $j(\mathcal K)$ is closed, the reader can consult \cite{Schmidt} and \cite{Radstrom}. We only prove here the facts concerning the action of $G$. Namely, we will prove that the action is well defined, continuous, isometric and that the embedding $j$ is equivariant.
Indeed, if $\langle A,B\rangle =\langle C,D\rangle$, then 
$$\overline{A+D}=\overline{B+C}.$$
Since every $g\in G$ is a linear homeomorphism, we have that
\begin{align*}
\overline{gA+gD}&=\overline{g(A+D)}=g(\overline{A+D})\\
&=g(\overline{B+C})=\overline{g(B+C)}=\overline{gB+gC}
\end{align*}
and so, the action is well defined.
Proving that this action is linear is simple routine and we leave the details to the reader. 
By Lemma~\ref{l:dhinvariante} we have
$$\|g\langle A,B\rangle\|=d_H(gA,gB)=d_H(A,B)=\|\langle A,B\rangle\| $$
which implies that each $g\in G$ is an isometry on $\mathcal H(\mathcal K)$. 
Furthermore, for each $A\in K$ and $g\in G$, we have
$$j(gA)=\langle gA, \{0\}\rangle=\langle gA, g\{0\}\rangle=g\langle A, \{0\}\rangle=gj(A).$$
This last equality means that the embedding is equivariant.

Finally, it rests  to prove that this action is continuous.
Let $(g, \langle A, B\rangle)\in G\times\mathcal H(\mathcal K)$ where $A$ and $B$ are fixed elements of $\mathcal K$ representing the equivalence class $\langle A,B \rangle$. Since the action of $G$ on $\mathcal K$ is continuous,  for each $\varepsilon>0$ it is possible to find a symmetric neighbourhood $O$ of the identity element $e$ in $G$ such that $d_H(A, hA)<\varepsilon/4$ and $d_H(B, hB)<\varepsilon/4$ for every $h\in O$.

Let $U=gO$ and let $V$ be the $\varepsilon/2$-ball around $\langle A, B\rangle $ in $\mathcal H(\mathcal K)$. Since $O$ is symmetric, it happens that $h^{-1}g\in O$ for every $h\in gO$. 
Now,  it follows from the definition of the norm on $\mathcal H(\mathcal K)$ and properties (\ref{e:distancia entre cerraduras}) and (\ref{e:d hausdorff invariante bajo traslaciones}) that 
\begin{align*}
\|g\langle A, B \rangle-h\langle C, D\rangle\|=\|h^{-1}g\langle A, B \rangle+\langle D, C\rangle\|=\|\langle \overline{h^{-1}gA+D},\overline{h^{-1}gB+C}\rangle\|\\
=d_H(\overline{h^{-1}gA+D},\overline{h^{-1}gB+C})=d_H(h^{-1}gA+D,h^{-1}gB+C)\\
\leq d_H(h^{-1}gA+D, A+D)+d_H(A+D, B+C)+d_H(B+C, h^{-1}gB+C)\\
=d_H(h^{-1}gA, A)+d_H(\overline{A+D}, \overline{B+C})+d_H(B, h^{-1}gB)\\
=d_H(h^{-1}gA, A)+ \|\langle A,B \rangle-\langle C,D\rangle\|+d_H(B, h^{-1}gB)\\
<\varepsilon/4+\varepsilon/2+\varepsilon/4=\varepsilon.
\end{align*}
 We conclude from the previous inequality that the action of $G$ on $\mathcal H(\mathcal K)$ is continuous and now the proof is complete.
\end{proof}

\section{Equivariant absolute extensor property}

In this section we prove the main results of this paper. Let us remark that if $G$ is a Lie group,  Theorem~\ref{c:main} is a particular case of \cite[Corollary 4.6]{Antonyan 2005}.

\begin{theorem}\label{c:main}
Let $G$ be a compact group and $L$ a Banach $G$-space.
Then, $cc(L)$ is a $G$-$\rm{AE}$. Additionally, if the induced action of $G$ on $CB(L)$ is continuous, then $CB(L)$ is also a $G$-$\rm{AE}$. 
\end{theorem}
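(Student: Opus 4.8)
The plan is to combine the equivariant embedding of Theorem~\ref{t:equivariant embedding} with the equivariant Dugundji-type extension theorems stated in Section~2. First I would apply Theorem~\ref{t:equivariant embedding} to realize $\mathcal K$ (where $\mathcal K = cc(L)$, or $\mathcal K = CB(L)$ when its induced action is continuous) as a closed, $G$-equivariant isometric copy $j(\mathcal K)$ inside the Banach $G$-space $\mathcal H(\mathcal K)$. By the properties (a) and (b) of the embedding $j$, namely $j(tA) = tj(A)$ for $t\geq 0$ and $j(\overline{A+B}) = j(A)+j(B)$, the image $j(\mathcal K)$ is stable under the linear convex-combination operations of $\mathcal H(\mathcal K)$: for $A,B\in\mathcal K$ and $t\in[0,1]$ one has $t\,j(A) + (1-t)\,j(B) = j\big(\overline{tA+(1-t)B}\big)$, so $j(\mathcal K)$ is a convex subset of $\mathcal H(\mathcal K)$. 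Since the embedding is equivariant, $j(\mathcal K)$ is moreover an invariant subset.

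Once $j(\mathcal K)$ is identified as an invariant, closed, convex subset of the Banach $G$-space $\mathcal H(\mathcal K)$, the conclusion follows by invoking Theorem~\ref{Equivariant Dugundji Theorem}. That theorem asserts that for a compact group $G$ acting linearly on a locally convex metric linear space $X$, every invariant complete convex subset $K\subset X$ is a $G$-$\mathrm{AE}$. Here $X = \mathcal H(\mathcal K)$ is a Banach $G$-space, hence a complete locally convex metric linear space with a linear $G$-action, and $K = j(\mathcal K)$ is invariant and convex. Completeness of $j(\mathcal K)$ is exactly its closedness in the complete space $\mathcal H(\mathcal K)$, which Theorem~\ref{t:equivariant embedding} already guarantees. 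Therefore $j(\mathcal K)$ is a $G$-$\mathrm{AE}$, and since $j$ is an equivariant homeomorphism onto its image, being a $G$-$\mathrm{AE}$ is a $G$-invariant of the $G$-homeomorphism type; thus $\mathcal K$ itself is a $G$-$\mathrm{AE}$.

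Applying this argument with $\mathcal K = cc(L)$ (where continuity of the induced action holds automatically, as noted after equation~(\ref{f:accion})) yields that $cc(L)$ is a $G$-$\mathrm{AE}$, and applying it with $\mathcal K = CB(L)$ under the hypothesis that the induced action on $CB(L)$ is continuous yields the second assertion. I expect the main conceptual work to already be packaged into the preceding results, so the remaining obstacle is essentially verification: one must check carefully that $j(\mathcal K)$ is genuinely convex, i.e.\ that the mixed convex combinations land back in the image under the correct identification $t\,j(A)+(1-t)\,j(B) = j(\overline{tA+(1-t)B})$ using properties (a) and (b) together with the definition of addition and scalar multiplication on $\mathcal H(\mathcal K)$. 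The only subtle point is the handling of the closure $\overline{\,\cdot\,}$ in the Minkowski sum, which is harmless in the compact case $cc(L)$ (where $A+B$ is already closed) but requires the closed-sum convention in the $CB(L)$ case; once this is settled, the hypotheses of Theorem~\ref{Equivariant Dugundji Theorem} are met and the proof closes.
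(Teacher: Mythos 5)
Your proposal is correct and follows essentially the same route as the paper: embed $\mathcal K$ via $j$ as an invariant closed convex subset of the Banach $G$-space $\mathcal H(\mathcal K)$ (Theorem~\ref{t:equivariant embedding}) and then apply Theorem~\ref{Equivariant Dugundji Theorem}. Your explicit verification of convexity of $j(\mathcal K)$ from properties (a) and (b) is a detail the paper leaves implicit, but the argument is the same.
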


\begin{proof}
As previously, let us denote by $\mathcal K$ the hyperspace $cc(L)$ or $CB(L)$ equipped with the induced (continuous) action of $G$.
According to Theorem~\ref{t:equivariant embedding}, $j(\mathcal K)$ is an invariant closed and convex  subset of a Banach $G$-space. 
Now, according to Theorem~\ref{Equivariant Dugundji Theorem}, $j(\mathcal K)$ is a $G$-$\rm{AE}$. Since $j(\mathcal K)$ and $\mathcal K$ are $G$-homeomorphic,  this directly implies both parts of the theorem.
\end{proof}

We will say that a family $\mathcal C\subset \mathcal K$ is convex iff
$$\overline{tA+(1-t)B}\in \mathcal C \quad \text{for every }A, B\in \mathcal C, ~~~~t\in [0,1]. $$

The following are examples of convex families:
\begin{enumerate}[i)]
\item $CB(M)$ and $cc(M)$, where $M$ is a closed convex subset of $L$.
\item  The family of all infinite dimensional convex compacta of $L$. 
\item  The family of all finite dimensional convex compacta of $L$.
\item The family of all infinite dimensional closed and bounded convex subsets of $L$.

\item The family of all convex bodies of $L$ (that is, the family of all closed and bounded convex subsets of $L$ with non empty interior).

\end{enumerate}

\begin{corollary}\label{c:main 2}
Let $G$ be a compact group and $L$ a Banach $G$-space. For any   
convex invariant subset $\mathcal C\subset \mathcal K$, the following statements are true: 
\begin{enumerate}[\rm(1)]
\item $\mathcal C\in\rm{AE}$
\item If $\mathcal C$ is closed in $\mathcal K$ then $\mathcal C\in G$-$\rm{AE}$ 
\item If $G$ is a Lie group, then $\mathcal C\in G$-$\rm{ANE}$.
If, in addition, $\mathcal C$ has a $G$-fixed point, then $\mathcal C\in G$-$\rm{AE}$. 
\end{enumerate}
\end{corollary}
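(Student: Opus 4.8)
The plan is to exploit the equivariant embedding from Theorem~\ref{t:equivariant embedding} together with the two equivariant Dugundji-type theorems (Theorems~\ref{Equivariant Dugundji Theorem} and \ref{Equivariant Dugundji Theorem Lie}). First I would observe that the embedding $j:\mathcal K\to\mathcal H(\mathcal K)$ is affine on $\mathcal K$: since $j(tA)=tj(A)$ for $t\ge 0$ and $j(\overline{A+B})=j(A)+j(B)$, it follows that $j(\overline{tA+(1-t)B})=tj(A)+(1-t)j(B)$ for every $t\in[0,1]$. Hence a subfamily $\mathcal C\subset\mathcal K$ is convex (in the sense defined above, using closures of Minkowski combinations) precisely when $j(\mathcal C)$ is a convex subset of the Banach $G$-space $\mathcal H(\mathcal K)$ in the ordinary linear-algebraic sense. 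Moreover $\mathcal C$ invariant gives $j(\mathcal C)$ invariant, because $j$ is equivariant. Thus the problem is transported from the hyperspace to a genuine convex invariant subset of a Banach $G$-space, where the hypotheses of the cited theorems apply verbatim.

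For part~(1), I would drop the group entirely: $j(\mathcal C)$ is a convex subset of a normed linear space, hence is an absolute extensor by the classical Dugundji Extension Theorem. Since $j$ restricts to a homeomorphism of $\mathcal C$ onto $j(\mathcal C)$, the property $\mathrm{AE}$ transfers back to $\mathcal C$. For part~(2), if $\mathcal C$ is closed in $\mathcal K$, then $j(\mathcal C)$ is closed in $j(\mathcal K)$; since $j(\mathcal K)$ is itself closed in $\mathcal H(\mathcal K)$ by Theorem~\ref{t:equivariant embedding}, the image $j(\mathcal C)$ is a \emph{complete} convex invariant subset of the Banach $G$-space $\mathcal H(\mathcal K)$. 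Theorem~\ref{Equivariant Dugundji Theorem} then gives $j(\mathcal C)\in G\text{-}\mathrm{AE}$, and equivariance of the $G$-homeomorphism $j$ transfers this to $\mathcal C$.

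For part~(3), when $G$ is a compact Lie group I would invoke Theorem~\ref{Equivariant Dugundji Theorem Lie} directly on $j(\mathcal C)$, a convex invariant subset of the locally convex linear $G$-space $\mathcal H(\mathcal K)$; no completeness is needed, so $j(\mathcal C)\in G\text{-}\mathrm{ANE}$, whence $\mathcal C\in G\text{-}\mathrm{ANE}$. If in addition $\mathcal C$ has a $G$-fixed point $A_0$, then $j(A_0)$ is a $G$-fixed point of $j(\mathcal C)$ (again by equivariance of $j$), and the second clause of Theorem~\ref{Equivariant Dugundji Theorem Lie} upgrades this to $j(\mathcal C)\in G\text{-}\mathrm{AE}$, and hence $\mathcal C\in G\text{-}\mathrm{AE}$.

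The only genuinely substantive point, and the one I would verify carefully, is the equivalence between the hyperspace notion of convexity (defined through $\overline{tA+(1-t)B}$) and ordinary convexity of the image under $j$; once the affinity of $j$ is established this is immediate, and the rest is a routine transfer of properties along the equivariant homeomorphism $j$, so I do not anticipate any serious obstacle beyond bookkeeping.
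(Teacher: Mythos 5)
Your proposal is correct and follows essentially the same route as the paper: transport $\mathcal C$ via the equivariant isometric embedding $j$ (using properties (a) and (b) to see that hyperspace convexity becomes ordinary convexity of $j(\mathcal C)$), then apply classical Dugundji for (1), Theorem~\ref{Equivariant Dugundji Theorem} for (2), and Theorem~\ref{Equivariant Dugundji Theorem Lie} for (3). The only difference is that you spell out the completeness of $j(\mathcal C)$ and the affinity of $j$ explicitly, details the paper leaves implicit.
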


\begin{proof}
By Theorem~\ref{t:equivariant embedding} and  properties (a) and (b), $\mathcal C$ is $G$-isometric to an invariant convex subset of a Banach $G$-space. 

Now, sentence (1) follows immediately  from  the classic Dugundji's Extension Theorem (see, e.g., \cite{Hu}).
To prove (2),  we  simply apply Theorem~\ref{Equivariant Dugundji Theorem}.
Finally, sentence (3) follows directly from Theorem~\ref{Equivariant Dugundji Theorem Lie}. 
\end{proof}



As an application, we obtain an alternative proof for \cite[Corollary 4.8 (1)]{Antonyan 2005}:

\begin{corollary}\label{c: corollary antonyan}
Consider the natural action of the orthogonal group $O(n)$ on the $n$-dimensional euclidean space $\mathbb R^{n}$. The following hyperspaces of convex sets are $O(n)$-$\rm{AE}$ spaces:
\begin{enumerate}[\rm(1)]
\item $cc(\mathbb R^{n})$,
\item $cc(\mathbb B^{n})$, where $\mathbb B^{n}=\{x\in\mathbb R^{n}\mid \|x\|\leq 1\}$ and $\|\cdot\|$ denotes the euclidean norm,
\item the hyperspace of all convex bodies of $\mathbb R^{n}$ (or $\mathbb B^{n})$,
\item the hyperspace of all centrally symmetric convex bodies of $\mathbb R^{n}$.
\end{enumerate}
\end{corollary}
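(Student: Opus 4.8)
The plan is to present all four hyperspaces as convex invariant subfamilies of $\mathcal K=cc(\mathbb R^{n})$ and then invoke the appropriate clause of Corollary~\ref{c:main 2}. The starting observation is that $O(n)$ is a compact Lie group acting linearly and isometrically on $(\mathbb R^{n},\|\cdot\|)$, so $\mathbb R^{n}$ is a Banach $O(n)$-space; since $\mathbb R^{n}$ is finite dimensional we moreover have $CB(\mathbb R^{n})=cc(\mathbb R^{n})$, so every ``closed bounded convex'' family below actually lives inside $cc(\mathbb R^{n})$. Statement (1) is then immediate from Theorem~\ref{c:main} applied to $L=\mathbb R^{n}$ and $G=O(n)$.

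For (2), the ball $\mathbb B^{n}$ is a closed bounded convex $O(n)$-invariant subset of $\mathbb R^{n}$ (invariance because $O(n)$ acts by isometries), so by example (i) in the list of convex families $cc(\mathbb B^{n})$ is a convex invariant subfamily of $cc(\mathbb R^{n})$, and by Lemma~\ref{l:closed hyperspace} it is closed in $cc(\mathbb R^{n})$. Hence Corollary~\ref{c:main 2}(2) gives that $cc(\mathbb B^{n})$ is $O(n)$-$\rm{AE}$.

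For (3) and (4) I would first verify that the relevant families are convex and invariant. The family of convex bodies is convex by example (v), and it is $O(n)$-invariant because each $g\in O(n)$ is a linear homeomorphism, hence carries a compact convex set with nonempty interior to another one; the same reasoning applies to the convex bodies contained in $\mathbb B^{n}$. For the centrally symmetric bodies one additionally checks that $A=-A$ and $B=-B$ force $\overline{tA+(1-t)B}=-\overline{tA+(1-t)B}$ (so the family is convex) and that $g(-A)=-gA$ for linear $g$ (so the family is invariant). The decisive remark is that none of these body-families is closed in $cc(\mathbb R^{n})$ --- a sequence of bodies may degenerate onto a lower-dimensional set in the Hausdorff limit --- so clause (2) is unavailable. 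However $O(n)$ is a Lie group and the round ball $\mathbb B^{n}$ is an $O(n)$-fixed point lying in each of these families (it is a convex body, it is contained in $\mathbb B^{n}$, and it is centrally symmetric). Therefore Corollary~\ref{c:main 2}(3) applies and yields that each of the families in (3) and (4) is $O(n)$-$\rm{AE}$.

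The only genuine obstacle is exactly this loss of closedness for the body hyperspaces; everything else is a routine check that the families are convex and $O(n)$-invariant. The resolution is to exploit that $O(n)$ is a compact Lie group together with the presence of the canonical $O(n)$-fixed point $\mathbb B^{n}$, which upgrades the $G$-$\rm{ANE}$ conclusion of Corollary~\ref{c:main 2}(3) to the desired $G$-$\rm{AE}$ conclusion.
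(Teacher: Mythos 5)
Your proposal is correct and follows exactly the route the paper intends: the paper states this corollary without an explicit proof, presenting it as a direct application of Theorem~\ref{c:main} and Corollary~\ref{c:main 2}, which is precisely what you do. Your identification of the key point --- that the body-families in (3) and (4) are not closed, so one must invoke clause (3) of Corollary~\ref{c:main 2} using that $O(n)$ is a compact Lie group and $\mathbb B^{n}$ is a $G$-fixed point in each family --- fills in the only nontrivial detail the paper leaves to the reader.
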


\section{Group actions by means of affinities}

Consider a compact group $G$ acting continuously on a Banach  space $L$ by means of affine transformations. 
  In this section we will show that the $G$-space $cc(L)$ is always a $G$-$\rm{AE}$. The technique used to prove this result is based on an argument used in \cite{Ant Jon Jua}
(cf.   \cite[Theorem~C]{A80} and \cite[Theorem~2]{Smirnov},)

For any Banach space $L$, denote by $\widetilde L$ the Banach $G$-space $C(G,L)$ equipped with the norm (\ref{f:norma en C(G,L)}) and the continuous action described in (\ref{f:accion en C(G,L)}).
Now, if $G$ acts affinely on $L$,   the map $\Phi:L\to \widetilde L$ defined by the rule 
 \begin{equation}\label{f:funcion Phi}
 \Phi(x)(g)=gx
 \end{equation}
 is an equivariant affine embedding (see \cite[Theorem~C]{A80},  \cite[Proposition 3.1]{Ant Jon Jua}   and \cite[Theorem~2]{Smirnov}).
 In addition, since $L$ is a Banach space, $\Phi(L)$ is closed in $\widetilde L$. Indeed, 
 any  convergent sequence $(\Phi(x_n))_{n\in \mathbb N}\rightsquigarrow h\in \widetilde L$ is also a Cauchy sequence on $\widetilde L$. Thus, for each $\varepsilon >0$, we can find  $N\in\mathbb N$ such that 
 $$\sup_{g\in G}\|gx_n-gx_m\|<\varepsilon\quad n,m\geq N.$$
 In particular, $\|x_n-x_m\|<\varepsilon$ for every $n,m\geq N$ and so, $(x_n)_{n\in\mathbb N}$ is also a Cauchy sequence in the Banach space $L$. Then, there exists a point $x\in L$ with the property that $(x_n)_{n\in\mathbb N}\rightsquigarrow x$. The continuity of map $\Phi$ implies that $(\Phi(x_n))_{n\in\mathbb N}\rightsquigarrow \Phi(x)$ and thus $\Phi(x)=h\in \Phi(L)$. This proves that $\Phi(L)$ is closed in $\widetilde L$.

Since $\Phi$ is affine and equivariant, it naturally extends to an equivariant bijective map $\widetilde \Phi:cc(L)\to cc(\Phi(L))$. Since the Hausdorff metric topology on $cc(L)$ depends only on the topology of $L$ (see, \cite{Beer}), this map is in fact an equivariant homeomorphism.

Now, according to Lemma~\ref{l:closed hyperspace}, $cc(\Phi(L))$ is closed in $cc(\widetilde L)$ and therefore $cc(L)$ is $G$-homeomorphic to an invariant closed convex  subset of a Banach $G$-space. Furthermore, any invariant (closed) convex subset $\mathcal C$ of $cc(L)$ is $G$-homeomorphic to an invariant (closed) convex subset of a Banach $G$-space.  All the previous arguments can  now be combined with Corollary~\ref{c:main 2} in order to prove the following theorem:

\begin{theorem}\label{t:accion afin}
Let $L$ be a Banach space and $G$ a compact topological group acting continuously on $L$ by means of affinities. Then, the following sentences are true:
\begin{enumerate}[\rm(1)]
\item Any closed and convex invariant subset $\mathcal C$ of $cc(L)$ is a $G$-$\rm{AE}$. In particular $cc(L)$ is a $G$-$\rm{AE}$.
\item If $G$ is a Lie group, any convex invariant subset $\mathcal C$ of $cc(L)$ is a $G$-$\rm{ANE}$. Additionally, if $\mathcal C$ has a fixed point, then $\mathcal C\in G$-$\rm{AE}$. 
\end{enumerate}
\end{theorem}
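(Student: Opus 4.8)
The plan is to transport the entire problem, via the equivariant homeomorphism $\widetilde\Phi\colon cc(L)\to cc(\Phi(L))$ constructed above, into the hyperspace $cc(\widetilde L)$ of the genuine Banach $G$-space $\widetilde L=C(G,L)$, where the linear machinery of Corollary~\ref{c:main 2} applies verbatim. Since the affine action of $G$ on $L$ is linearized by the embedding $\Phi$, this reduces the affine case to the already-settled linear one, and the theorem becomes essentially a formal consequence of the constructions preceding it.

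First I would check that $\widetilde\Phi$ carries the relevant structure on $cc(L)$ to the corresponding structure on $cc(\widetilde L)$. The crucial point is that $\widetilde\Phi$ preserves convexity of families: because $\Phi$ is affine and because every member of $cc(L)$ is compact (so Minkowski combinations are already compact and no closure is needed), for all $A,B\in cc(L)$ and $t\in[0,1]$ one has
\begin{align*}
\widetilde\Phi\big(\overline{tA+(1-t)B}\big)&=\Phi\big(tA+(1-t)B\big)=t\,\widetilde\Phi(A)+(1-t)\,\widetilde\Phi(B)\\
&=\overline{t\,\widetilde\Phi(A)+(1-t)\,\widetilde\Phi(B)}.
\end{align*}
Hence, if $\mathcal C\subset cc(L)$ is convex in the sense defined above, then $\widetilde\Phi(\mathcal C)$ is a convex family in $cc(\widetilde L)$. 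Invariance of $\widetilde\Phi(\mathcal C)$ follows from the equivariance of $\widetilde\Phi$; closedness of $\mathcal C$ in $cc(L)$ is equivalent to closedness of $\widetilde\Phi(\mathcal C)$ in $cc(\widetilde L)$, since $\widetilde\Phi$ is a homeomorphism onto $cc(\Phi(L))$ and the latter is closed in $cc(\widetilde L)$ by Lemma~\ref{l:closed hyperspace} (using that $\Phi(L)$ is closed). Finally, equivariance sends a $G$-fixed point $C_0\in\mathcal C$ to a $G$-fixed point $\widetilde\Phi(C_0)$ of $\widetilde\Phi(\mathcal C)$.

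With these observations in hand I would apply Corollary~\ref{c:main 2} to the Banach $G$-space $\widetilde L$, taking $\mathcal K=cc(\widetilde L)$ and the convex invariant subset $\widetilde\Phi(\mathcal C)$. Part (2) of that corollary gives that $\widetilde\Phi(\mathcal C)$ is a $G$-$\rm{AE}$ whenever $\mathcal C$ is closed, which yields sentence (1) (with $\mathcal C=cc(L)$ itself as the special case, being trivially closed and convex); part (3) yields the $G$-$\rm{ANE}$ conclusion in the Lie case, upgraded to $G$-$\rm{AE}$ when a fixed point is present, giving sentence (2). In each case the extensor property transports back to $\mathcal C$ through the equivariant homeomorphism $\widetilde\Phi$, since $G$-$\rm{AE}$ and $G$-$\rm{ANE}$ are invariants of $G$-homeomorphism. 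The only step requiring genuine care is the verification that $\widetilde\Phi$ respects the closure-based notion of convexity of families; this is precisely where the compactness of the members of $cc(L)$ is indispensable, as it allows the closures to be dropped and the affine identity $\Phi(tx+(1-t)y)=t\Phi(x)+(1-t)\Phi(y)$ to be applied setwise. Everything else is the formal transfer of equivariant extensor properties along an equivariant homeomorphism.
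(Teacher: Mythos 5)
Your proposal is correct and follows essentially the same route as the paper: linearize the affine action via the equivariant affine embedding $\Phi\colon L\to\widetilde L=C(G,L)$, transport $\mathcal C$ through the induced equivariant homeomorphism $\widetilde\Phi$ onto a convex invariant (closed, when $\mathcal C$ is closed) subset of $cc(\widetilde L)$, and invoke Corollary~\ref{c:main 2}. Your explicit verification that compactness lets the closures be dropped, so that $\widetilde\Phi$ preserves convexity of families, is a detail the paper leaves implicit but is exactly the intended argument.
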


\textbf{Final Remark.}
The map $\Phi$ defined in (\ref{f:funcion Phi}) can also be extended to an equivariant biyection from $CB(L)$ to $CB(\Phi(L))$. However,  since the map $\Phi$ is not uniformly continuous, the hyperspaces $CB(L)$ and  $CB(\Phi(L))$ may not be homeomorphic. For this reason, we cannot use the previous arguments to extend Theorem~\ref{t:accion afin} to  $CB(L)$, which leads to the following  question. 

\begin{question}
Suppose that a compact group $G$ acts continuously and affinely on a Banach space $L$ in such a way that the induced action on $CB(L)$ is continuous. Is  $CB(L)\in G$-$\rm{AE}$? 
\end{question}

\bibliographystyle{amsplain}

\end{document}